\def\munderbar#1{\underline{\sbox\tw@{$#1$}\dp\tw@\z@\box\tw@}}
\newtheorem{theorem}{Theorem}[section]
\newtheorem{corollary}[theorem]{Corollary}
\newtheorem{proposition}[theorem]{Proposition}
\newcommand{\be}{\begin{equation}}
\newcommand{\ee}{\end{equation}}
\newcommand{\bea}{\begin{equation*}\begin{aligned}}
\newcommand{\eea}{\end{aligned}\end{equation*}}
\newcommand{\ds}{\displaystyle}
\newcommand{\R}{\mathbb{R}}
\newcommand{\Max}{\max\limits_}
\newcommand{\Min}{\min\limits_}
\newcommand{\Sup}{\sup\limits_}
\newcommand{\Inf}{\inf\limits_}
\newcommand{\st}{\mathrm{s.t.}}
\newcommand{\wh}{\widehat}
\newcommand{\mc}{\mathcal}
\newcommand{\mbb}{\mathbb}
\newcommand{\PP}{\mbb P}
\newcommand{\QQ}{\mbb Q}
\newcommand{\Let}{\triangleq}
\newcommand{\opt}{^\star}
\newcommand{\eps}{\varepsilon}
\newcommand{\EE}{\mathds{E}}
\newcommand{\m}{\mu}
\author{Duy Anh Nguyen} 
\thanks{The author is with the Hanoi Obstetrics and Gynecology Hospital, Vietnam.}
\title[Distributionally Robust Fair Workload Balancing]{Workload Balancing Among Heathcare Workers Under Uncertain Service Time Using Distributionally Robust Optimization}
\date{\today}
\begin{document}

\maketitle

\begin{abstract}
Healthcare systems are facing serious challenges in balancing their human resources to cope with volatile service demand, while at the same time providing necessary job satisfaction to the healthcare workers. We propose in this paper a distributionally robust optimization formulation to generate a task assignment plan that promotes the fairness in allocation, attained by reducing the difference in the total working time among workers, under uncertain service time. The proposed joint chance constraint model is conservatively approximated by a worst-case Conditional Value-at-Risk, and we devise a sequential algorithm to solve the finite-dimensional reformulations which are linear (mixed-binary) optimization problems. We also provide explicit formula in the situation where the support set of the random vectors is a hyperrectangle. The experiment with synthetic data suggests promising results for our approach.

\textbf{Keywords.} Fairness in healthcare; task allocations; stochastic service time; distributionally robust optimization; joint chance constraint.
\end{abstract}

\section{Introduction}
\label{sect:intro}

The fast aging population and devastating pandemic outbreaks are exerting extreme pressure on the worldwide healthcare systems. Apart from the lack of medical devices, prescriptive drugs and the lack of developed infrastructure, healthcare providers are in critical shortage of qualified workers, including surgeons, medical doctors and nurses~\cite{ref:aluttis2014workforce}. Many surveys indicate that a majority of healthcare workers are not satisfied with their occupation due to low salary, tremendous emotional stress and long shift working hours~\cite{ref:khani2008quality, ref:moradi2014quality, ref:suleiman2019quality}. Because the healthcare workers hold direct responsibility to communicate and provide service to patient, ensuring their job satisfaction is an important factor to delivery the highest possible quality of treatment and care to the community in need~\cite{ref:gabrani2016perceived,ref:seda2019multi, ref:permarupan2020nursing}. Hence, modern hospital management needs to deal with numerous complex and conflicting criteria in the daily operations~\cite{ref:landa2018multi}.

Apart from the provision of necessary support and attractive compensation, a reasonable work schedule is arguably one of the most critical factor to promote job satisfaction in healthcare workers~\cite{ref:glass1010nurse, ref:nelson2018development}. Besides the number of working hours per day/week and the characteristics of the work rotation, the current research is focusing on promoting fairness in the scheduling or the rostering of the workers. While there are many definitions for the notion of fairness in the existing literature~\cite{ref:ibrahim2011nurse, ref:sangai2017workload, ref:uhde2020fairness}, in this paper, we focus on a specific choice of fairness that ensure a balance workload among health workers. Workload balancing has also been studied in the literature related to scheduling, where the focus has been placed on the static setting~\cite{ref:maenhout2010branching, ref:burke2010hybrid, ref:bai2010hybrid}.

In this paper, we use the duration of the task as a proxy to calculate the workload of the tasks, and as such, other factors which may directly contribute to the workload such as the complexity of the task are omitted. Balancing the workload in this case is simplified to generating a shift schedule in which all healthcare workers will spend a relatively similar amount of time to finish the assigned jobs. In reality, the exact duration of each task, also referred to as the service time, is rarely known in advanced, and thus, it is difficult to correctly specify the true workload given to each worker. If we use random vectors to model the service time, then in many situations, we may not have access to the true distribution of these random vectors. At best, we may have access to limited training or historical data, from which we can try to infer the ambiguity set which contains the true distribution with high confidence. Consequentially, workload balancing under uncertainty remains a challenging and active area for both theoretical research as well as operational implementation~\cite{ref:mousazadeh2018health}.

There are several attractive frameworks to deal with the inherent uncertain nature of healthcare decision making at both the operational and the planning level including stochastic programming~\cite{ref:shapiro2014lectures} and robust optimization~\cite{ref:bental09robust}.
Distributionally robust optimization is an emerging framework for decision support when the information about the distribution of the underlying randomness is incorrect or incomplete~\cite{ref:delage2010distributionally, ref:kuhn2019wasserstein}. Distributionally robust solutions have been shown to deliver superior performance in many healthcare decision making tasks, including, but not limited to, appointment scheduling \cite{ref:zhang2017dist, ref:kong2020appointment}, nurse staffing~\cite{ref:ryu2019nurse}, surgery block allocation~\cite{ref:wang2019dist}, and location planning of emergency service~\cite{ref:liu2019dist}.

This paper aims to bridge the existing gap in the literature of fair scheduling of healthcare workers by considering a flexible model to solve the workload balancing under uncertain service time using the distributionally robust optimization framework. This paper stems from one of the long-standing efforts of the management team at our hospital\footnote{Hospital name is anonymized} to improve the scheduling and rostering of our healthcare workers, especially in improving their job satisfaction.  The contributions of this paper are highlighted as follows. 
\begin{itemize}
	\item We detail a joint chance constraint optimization problem that aims to produce a fair allocation of tasks among a team of healthcare workers. This optimization problem requires that the difference of the working time among healthcare workers is bounded by a threshold with high probability. Based on this formulation, we propose the distributionally robust optimization problem with joint chance constraint to deal with situations when the true underlying probability of the randomness is difficult to be identified, or when this probability may entail the risk of misspecification. 
	\item We develop a conservative approximation of the joint chance constraint using the worst-case Conditional Value-at-Risk when the ambiguity set contains the expected values and support information of the true distribution. Furthermore, we propose a sequential optimization algorithm to efficiently solve the resulting reformulations, which are often linear (binary) optimization problems. To facilitate practical implementation of the approach, we also tailor the result to the special case where the random vector is supported on a hyperrectangular set.
\end{itemize}

This paper unfolds as follows. Section~\ref{sect:formulations} layouts our mathematical framework for workload balancing under uncertain service time. Section~\ref{sect:approx} details the approximation scheme, which will be tailored to the hyperrectangular support set in Section~\ref{sect:hypercube}. Section~\ref{sect:numerical} reports the numerical results and Section~\ref{sect:conclusions} concludes the paper with future research directions. 

\textbf{Notations.} At any given risk level $\eps \in (0, 1)$, the Conditional Value-at-Risk of a measurable loss function $\ell(\xi)$ dependent on the random vector $\xi$ whose distribution is governed by a probability measure $\QQ$ is defined as $\QQ\text{-CVaR}_\eps(\ell(\xi)) \Let \inf_{\tau} \left\{ \tau + \eps^{-1} \EE_\QQ[ \max\{0, \ell(\xi) - \tau\}] \right\}$, where the operator $\EE_\QQ$ denotes the integration under the probability measure $\QQ$. We use $|\mc A|$ to denote the cardinality of the set $\mc A$. For any matrix (vector) $A$, we denote by $A^\top$ its transpose. For two vectors $l$ and $u$ of the same dimension, the inequalities $l \le u$ and $l < u$ are understood in the element-wise sense. For two integers $a \leq b$, we use $\llbracket a, b\rrbracket$ to denote the set $\{a, a+1, \ldots, b\}$.
%%%%%%%%%%%%%%%%%%%%%%%%%%%
\section{Model Formulations}
\label{sect:formulations}

We consider a healthcare system with a collection $\mc J$ of healthcare workers. Given a set of tasks $\mc I$, the planner aims to find the best plan to assign each task $i \in \mc I$ to a worker $j \in \mc J$ to maximize the system output, while at the same time ensures that the total workload assigned to each worker $j \in \mc J$ is comparatively balanced. We assume that task $i \in \mc I$ utilizes a random amount of resource $\xi_i$, and assigning task $i \in \mc I$ to worker $j \in \mc J$ brings a deterministic reward $r_{ij} \in \R$. If we use the binary decision variables $x_{ij}$ with
\[
x_{ij} = \begin{cases}
1 & \text{if task $i$ is assigned to worker $j$,} \\
0 & \text{otherwise.}
\end{cases}
\]
to carry the information of the task assignment to each worker, then the total time that takes worker $j$ to finish all given tasks can be written as the sum $\sum_{i \in \mc I} \xi_i x_{ij}$. To promote the fairness in the allocation of tasks to workers, it is plausible to impose that all workers will require exactly the same total amount of time to finish the given tasks, which can be equivalently translated to a mathematical constraint of the form
\[
	\sum_{i \in \mc I} \xi_i x_{ij} = \sum_{i \in \mc I} \xi_i x_{ij'}  \quad \forall (j, j') \in \mc J \times \mc J.
\]
Unfortunately, because $\xi$ are random service time, the above constraint can hardly be met in many settings. Indeed, if $\xi$ is governed by a probability measure which is absolutely continuous with respect to a Lebesgue measure then this constraint is met with probability 0 irrespective of any choice of assignment $x$. A possible remedy is to constraint the time difference to be lower than a positive threshold $\Delta \in \R_+$ as
\[
	\Big| \sum_{i \in \mc I} \xi_i (x_{ij} - x_{ij'} ) \Big| \leq \Delta \quad \forall (j, j') \in \mc J \times \mc J,
\]
where the inequality should be understood as an almost-surely inequality, which basically means it should hold with probability one. Unfortunately, this almost-surely constraint is extremely pessimistic and oftentimes it leads to the infeasibility of the optimization problem. Instead of imposing this constraint, we seek to propose a relaxation using the joint chance constraint reformulation. Thus, we propose the \textit{stochastic fairness-constrained allocation problem} which can be written as a joint chance constrained optimization problem
\be \label{eq:stoprog}
    \begin{array}{cl}
        \max & f(x) \Let \ds \sum_{ (i, j) \in \mc I \times \mc J} r_{ij} x_{ij}  \\
        \st & x \in \mathbb X \\
        & \ds \PP\left( \Big| \sum_{i \in \mc I} \xi_i (x_{ij} - x_{ij'}) \Big| \le \Delta \; \forall (j, j') \right) \ge 1-\eps.
    \end{array} 
\ee
In problem~\eqref{eq:stoprog}, the set of feasible assignment $\mbb X$ is defined as
\[
    \mbb X \Let \left\{
        x \in \mbb \{0, 1\}^{| \mc I | \times | \mc J|}: \sum_{j} x_{ij} = 1 \; \forall i \in \mc I
    \right\},
\]
where the constraints defining $\mbb X$ indicate that any task $i$ is assigned to exactly one worker. The objective function $f(x)$ quantifies the total reward of each allocation $x$ and it is assumed to be an affine function of $x$. The joint chance constraint in~\eqref{eq:stoprog} is prescribed using a balance threshold $\Delta \in \R_+$ and a small tolerance $\eps \in (0, 1)$. The sign $\forall (j, j')$ in this constraint is understood as for all pairs of workers $(j, j') \in \mc J \times \mc J$. This joint chance constraint dictates that the comparative workload among all workers is balanced up to a threshold $\Delta$ with high probability $1 - \eps$. 

Problem~\eqref{eq:stoprog} is a stochastic program which requires the true probability distribution $\PP$ of the (joint) distribution of the random resource utilization $\xi$ as input. Unfortunately, evaluating whether an allocation $x$ is feasible for the joint chance constraint of~\eqref{eq:stoprog} is in general \#P-hard~\cite{ref:dyer1988on}. Moreover, due to the complex nature of the tasks in the healthcare system, full knowledge about $\PP$ is rarely known in practice. At best, the healthcare system may have access to some historical data which can be used to infer $\PP$. However, the available data may be very scarce and may not represent well the distribution of certain task. 

To alleviate the downsides of problem~\eqref{eq:stoprog}, we propose to approach problem~\eqref{eq:stoprog} using the lens of distributionally robust optimization. To this end, we define the ambiguity set
\be \label{eq:ambi-def}
    \mc Q \Let\left\{ \QQ \in \mc P(\mbb R^{|\mc I|}): \EE_{\QQ}[\xi] = \mu,~\QQ(\xi \in \Xi) = 1 \right\},
\ee
where $\mc P(\R^{|\mc I|})$ is the set of all probability measures on $\R^{|\mc I|}$ and the support set $\Xi$ is a compact polyhedron (polytope) that can be described effectively by $M$ linear constraints of the form
\be \label{eq:Xi-def}
    \Xi = \left\{
        \xi \in \mbb R^{|\mc I|}: G \xi \leq h
    \right\}
\ee
for some matrix $G \in \R^{M \times |\mc I|}$ and vector $h \in \R^M$. Furthermore, we assume that the mean vector $\mu$ belongs to the interior of the support $\Xi$, that means $\m \in \mathrm{int}(\Xi)$, or equivalently $G \mu < h$. Descriptively speaking, the ambiguity set $\mc Q$ contains all probability measures under which $\xi$ has a given mean vector $\mu$ and the support of $\Xi$ is contained in the polytope $\Xi$. Using this ambiguity set $\mc Q$, we define the \textit{set of distributionally robust fair allocations}
\begin{align*}
    &\mc X \Let
    \left\{ \begin{array}{l} 
    x \in \mbb R^{|\mc I | \times |\mc J|}: \\
    \Min{\QQ \in \mc Q}\ds \QQ\left( \Big| \sum_{i \in \mc I} \xi_i (x_{ij} - x_{ij'}) \Big| \le \Delta \; \forall (j, j')  \right) \geq 1-\eps
    \end{array}
    \right\}
\end{align*}
that contains allocations $x$ such that the workload among workers are comparatively balanced for \textit{all} probability measures $\QQ$ in $\mc Q$. Instead of solving the stochastic fairness-constrained allocation problem~\eqref{eq:stoprog}, we propose to solve the following \textit{distributionally robust fairness-constrained allocation problem}
\be \label{eq:dro}
        \max \left\{ f(x)  : x \in \mathbb X  \cap \mc X \right\},
\ee
which can also be written in a more explicit form as
\be \notag
	\begin{array}{cl}
		\max & f(x)  \\
		\st & x \in \mathbb X \\
		&     \Min{\QQ \in \mc Q}\ds \QQ\left( \Big| \sum_{i \in \mc I} \xi_i (x_{ij} - x_{ij'}) \Big| \le \Delta \; \forall (j, j') \right) \ge 1-\eps.
	\end{array} 
\ee
Before moving on to Section~\ref{sect:approx} where we provide a solution procedure to problem~\eqref{eq:dro}, we would like to first discuss the components of~\eqref{eq:dro} in further details. While the set of allocations $\mbb X$ has been defined in a minimalistic fashion, we emphasize that there is a great flexibility to incorporate additional operational constraints into $\mbb X$ to promote a higher level of fairness in the assignment. These constraints may include, but are not restricted to, the following:
\begin{itemize}
	\item skillset restrictions: if a worker $j$ does not possess the skill required by task $i$, then the linear constraint $x_{ij} = 0$ can be added into $\mbb X$,
	\item workers' preferences: if a worker $j$ is not willing to perform task $i$, then the linear constraint $x_{ij} = 0$ can be added into $\mbb X$,
	\item pre-assignment: if task $i$ has to assigned to worker $j$, then the linear constraint $x_{ij} = 1$ can be added into $\mbb X$,
	\item knapsack constraints: if worker $j$ can perform at most $\theta_j$ tasks, then the linear constraints $\sum_{i \in \mc I} x_{ij} \le \theta_j$ can be added into $\mbb X$.
\end{itemize}
Similarly, there are various penalty terms that can be incorporated into the loss function $f$ to reflect real-life incurred costs. However, these modifications of the feasible set $\mbb X$ and the objective function $f$ does not alter the reformulation or approximation of the distributionally robust fair allocation set $\mc X$. Because our main goal in this paper is to explore efficient methods to embody the set $\mc X$ into the optimization phase, we thus keep $\mbb X$ and $f$ as simple as possible to avoid any unwanted effects on the quality of the fair allocation.

Returning to the joint chance constraint in~\eqref{eq:dro}, we would like to emphasize that the choice of the \textit{joint} chance constraint is imperatively preferable compared to a set of \textit{individual} chance constraints. Indeed, consider the following set of individual chance constraints
\be \label{eq:individual}
	\left.
	\begin{array}{l}
	\ds \PP\left( \sum_{i \in \mc I} \xi_i (x_{ij} - x_{ij'}) \le \Delta \right) \ge 1-\epsilon_{jj'}^1 \\
	\ds \PP\left( \sum_{i \in \mc I} \xi_i (x_{ij'} - x_{ij}) \le \Delta \right) \ge 1-\epsilon_{jj'}^2
	\end{array}
	\right\} \quad \forall (j, j') \in \mc J \times \mc J
\ee
for a collection of parameters $(\epsilon_{jj'}^1, \epsilon_{jj'}^2)_{(jj') \in \mc J \times \mc J}$. Theoretically, an allocation $x$ which is feasible for the above set of individual chance constraint will satisfy
\[
	\PP\left( \Big| \sum_{i \in \mc I} \xi_i (x_{ij} - x_{ij'}) \Big| \le \Delta \; \forall (j, j') \right) \ge 1 - \epsilon
\]
with $\epsilon = \sum (\epsilon_{jj'}^1 + \epsilon_{jj'}^2)$ by the Bonferroni inequality~\cite[Section~6.1]{ref:prekopa1995stochastic}. This implies that an allocation that satisfies the set of individual chance constraint may not theoretically deliver the necessary guarantee of fairness \textit{jointly} over all pairs of workers with high probability. Consequentially, to achieve a fixed tolerance $\eps$ in the joint chance constraint formulation~\eqref{eq:stoprog}, one may resort to solve an approximation using individual chance constraints of the form~\eqref{eq:individual} with the parameters $\epsilon_{jj'}^1$ and $\epsilon_{jj'}^2$ being set to a sufficiently small value, typically by setting to a common value $\epsilon_{jj'}^1 = \epsilon_{jj'}^2 = \eps/(2 | \mc J |^2)$. Unfortunately, for real-life applications with a large number of workers, this choice of individual parameters tends to be over-conservative and may, in specific cases, fail to even identify a feasible allocation to problem~\eqref{eq:stoprog}. The incapability of the Bonferroni approach to deliver sufficient performance guarantee for the joint chance constraint in the distributionally robust optimization setting has been numerically studied in~\cite{ref:ordoudis2018energy}. This argument justifies the merit of embracing the joint chance constraint formulation in~\eqref{eq:stoprog} and in its distributionally robust counterpart~\eqref{eq:dro}. Finally, one can also augment the threshold $\Delta$ to pair-specific threshold $\Delta_{jj'}$ for each $(j, j')$ to capture more subtle specification of the system. This parameters augmentation will not alter the approach in this paper, thus, we keep a common threshold $\Delta$ for the sake of simplicity.
%%%%%%%%%%%%%%%%%%%%%%%%%%%%%%%%%%%%%%
\section{Tractable Approximations}
\label{sect:approx}

Problem~\eqref{eq:dro} is intuitively appealing, however, it is also notoriously difficult to be solved. Indeed, evaluating whether an allocation $x$ belongs to the set $\mc X$ involves solving an infinite dimensional optimization problem over the space of probability measures. Leveraging on diverse results from the field of distributionally robust optimization, we derive in this section a safe and tractable approximation for problem~\eqref{eq:dro}. To this end, we notice that the argument in the joint chance constraint
\[
	\Big| \sum_{i \in \mc I} \xi_i (x_{ij} - x_{ij'}) \Big| \le \Delta \quad \forall (j, j') \in \mc J \times \mc J
\]
can be written explicitly as a collection of $2 \times | \mc J |$ linear constraints
\[
	\left.
	\begin{array}{r}
	\ds \sum_{i \in \mc I} \xi_i (x_{ij} - x_{ij'})  - \Delta \le 0 \\
	\ds - \sum_{i \in \mc I} \xi_i (x_{ij} - x_{ij'}) - \Delta \le 0 
	\end{array}
	\right\} 
	\quad \forall (j, j') \in \mc J \times \mc J.
\]
Adopting a similar strategy as in~\cite{ref:chen2010from}, we associate the first and second set of constraints with a set of strictly positive scaling factors $\alpha_{jj'}$ and $\beta_{jj'}$ respectively, and the above collection of linear constraints is equivalent to
\[
	\left.
	\begin{array}{r}
	\ds \alpha_{jj'} \Big(\sum_{i \in \mc I} \xi_i (x_{ij} - x_{ij'})  - \Delta \Big) \le 0 \\
	\ds \beta_{jj'} \Big(-\sum_{i \in \mc I} \xi_i (x_{ij} - x_{ij'}) - \Delta \Big) \le 0 
	\end{array}
	\right\} 
	\; \forall (j, j') \in \mc J \times \mc J.
\]
Thus, for any strictly positive scaling matrices $\alpha, \beta \in \R_{++}^{|\mc J| \times |\mc J|}$, the set of probabilistically fair allocations $\mc X$ can be written as
\begin{align*}
    &\mc X = \left\{ \begin{array}{l} 
    x \in \mbb R^{|\mc I | \times |\mc J|}: \\
    \Inf{\QQ \in \mc Q}\ds \QQ\big( \max_{k \in \llbracket 1, K \rrbracket} \{ \xi^\top a_k(x) + b_k \} \le 0 \big) \geq 1-\eps
    \end{array}
    \right\},
\end{align*}
where $K = 2 |\mc J|^2$ is the total number of individual constraints in the joint chance constraint. For any~$k \in \llbracket 1, K \rrbracket$, the $i$-th component of the vector $a_k(x) \in \R^{|\mc I|}$ dependent on $x$ is defined as
\be \label{eq:a-def}
	\begin{aligned}
	&[a_{k}(x)]_{i} = \begin{cases}
		\alpha_{jj'} (x_{ij} - x_{ij'}) & \text{if } k = (j-1) |\mc J| + j', \\
		\beta_{jj'} (x_{ij'} - x_{ij}) & \text{if } k = |\mc J|^2 + (j-1) |\mc J| + j',
	\end{cases}
	\end{aligned}
\ee
for any $i \in \mc I$, and the scalar $b_k \in \R$ is defined as
\be \label{eq:b-def}
	b_{k} = \begin{cases}
		-\alpha_{jj'} \Delta & \text{if } k = (j-1) |\mc J| + j', \\
		-\beta_{jj'} \Delta & \text{if } k = |\mc J|^2 + (j-1) |\mc J| + j'.
	\end{cases}
\ee
We emphasize that the vectors $a_k$ and the scalars $b_k$ are dependent on the specific choice of the scaling factors $\alpha$ and $\beta$, however, this dependence is made implicit at this moment to avoid cluttered notations.
Next, we define the worst-case Conditional Value-at-Risk function
\[
	\mc R_{\alpha, \beta}(x) \Let \Sup{\QQ \in \mc Q} \QQ\text{-CVaR}_\eps \left( \max_{k \in \llbracket 1, K \rrbracket} \{ \xi^\top a_k(x) + b_k \} \right).
\]
Notice that because $a_k(x)$ and $b_k$ are dependent on the scaling matrices $\alpha$ and $\beta$, the worst-case CVaR also depends on these scaling matrices and this dependence is made explicit. We now define the following set
\begin{align*}
    \mc X^\circ(\alpha, \beta) = 
    \left\{ x \in \mbb R^{|\mc I | \times |\mc J|}: \mc R_{\alpha, \beta}(x) \le 0 \right\}.
\end{align*}
We can show that $\mc X^\circ(\alpha, \beta) \subseteq \mc X$ for any strictly positive scaling matrices $\alpha, \beta \in \R_{++}^{|\mc J| \times |\mc J|}$ \cite{ref:nemirovski2006convex}. As a consequence, the optimization problem
\be \label{eq:dro-approx}
    \begin{array}{cl}
        \max & f(x)  \\
        \st & x \in \mathbb X  \cap \mc X^\circ(\alpha, \beta)
    \end{array} 
\ee
constitutes a conservative approximation of the fairness-constrained distributionally robust allocation problem~\eqref{eq:dro}.

\begin{theorem}[Reformulation of worst-case CVaR] \label{thm:refor}
	For any fixed allocation $x \in \R^{|\mc I| \times | \mc J|}$, scaling matrices $\alpha \in \R_{++}^{|\mc J| \times |\mc J|}$ and $\beta \in \R_{++}^{|\mc J| \times |\mc J|}$, the worst-case CVaR $\mc R_{\alpha, \beta}(x)$ equals to the optimal value of the linear optimization problem
\[
	\begin{array}{cl}
            \min & \ds \gamma + \mu^\top \lambda \\ [2ex]
            \st & \gamma \in \R,\, \tau \in \R,\, \lambda \in \R^{|\mc I|}, \, \eta_k \in \R_+^{M} \quad \forall k \in \llbracket 0, K \rrbracket \\
            &\left. 
            \begin{array}{l}
            b_k  - (1 - \eps ) \tau \leq \eps (\gamma - h^\top \eta_k)   \\
            \eps (G^\top \eta_k + \lambda)  = a_k(x)
            \end{array}
            \right\}
            \quad \forall k \in \llbracket 1, K \rrbracket \\
		&\tau \leq \gamma - h^\top \eta_0, \quad G^\top \eta_0 + \lambda = 0.
        \end{array}
\]
\end{theorem}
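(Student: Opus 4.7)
The plan is to carry out the reformulation in three standard steps: interchange the $\sup_\QQ$ with the $\inf_\tau$ hidden inside the CVaR, apply moment duality to the resulting worst-case expectation, and then invoke LP duality on the polyhedral support $\Xi$ to eliminate the ``for all $\xi \in \Xi$'' quantifier. Writing $\ell(\xi) = \max_{k \in \llbracket 1,K\rrbracket}\{\xi^\top a_k(x) + b_k\}$, I would first establish that
\[
\mc R_{\alpha,\beta}(x) = \inf_{\tau \in \R}\Big\{\tau + \eps^{-1} \sup_{\QQ \in \mc Q} \EE_\QQ[\max\{0,\ell(\xi)-\tau\}]\Big\}.
\]
The exchange is legitimate because the objective is linear in $\QQ$ and convex in $\tau$, and the effective $\tau$ can be restricted to the compact interval $[\min_{\xi \in \Xi}\ell(\xi), \max_{\xi \in \Xi}\ell(\xi)]$ thanks to compactness of the polytope $\Xi$.

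For each fixed $\tau$ the inner problem is a moment problem over $\mc Q$: maximize a linear functional of $\QQ$ subject to a prescribed first moment $\EE_\QQ[\xi] = \mu$ and the support condition $\QQ(\Xi) = 1$. Under the interior-point assumption $G\mu < h$ imposed in Section~\ref{sect:formulations}, strong moment duality gives
\[
\sup_{\QQ \in \mc Q} \EE_\QQ[\max\{0,\ell(\xi)-\tau\}] = \inf_{\widetilde\gamma,\widetilde\lambda}\big\{\widetilde\gamma + \mu^\top \widetilde\lambda : \widetilde\gamma + \xi^\top \widetilde\lambda \ge \max\{0,\ell(\xi)-\tau\} \; \forall \xi \in \Xi\big\}.
\]
Splitting the pointwise $\max$ into the zero term and each of the $K$ affine pieces produces $K+1$ semi-infinite constraints indexed by $\xi \in \Xi$. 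Since $\Xi = \{\xi : G\xi \le h\}$ is non-empty, strong LP duality converts each such constraint into an existential statement: the zero-term constraint is equivalent to the existence of $\widetilde\eta_0 \ge 0$ with $G^\top \widetilde\eta_0 = -\widetilde\lambda$ and $h^\top \widetilde\eta_0 \le \widetilde\gamma$, and the $k$-th affine constraint is equivalent to the existence of $\widetilde\eta_k \ge 0$ with $G^\top \widetilde\eta_k = a_k(x) - \widetilde\lambda$ and $h^\top \widetilde\eta_k \le \widetilde\gamma + \tau - b_k$.

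To arrive at the clean form in the statement, I would then perform the change of variables $\gamma = \tau + \widetilde\gamma/\eps$, $\lambda = \widetilde\lambda/\eps$, and $\eta_k = \widetilde\eta_k/\eps$ for every $k \in \llbracket 0, K\rrbracket$. A short direct computation checks that the joint objective $\tau + \eps^{-1}(\widetilde\gamma + \mu^\top \widetilde\lambda)$ collapses to $\gamma + \mu^\top \lambda$, that the zero-term dual conditions become $G^\top \eta_0 + \lambda = 0$ together with $\tau \le \gamma - h^\top \eta_0$, and that the $k$-th dual conditions become $\eps(G^\top \eta_k + \lambda) = a_k(x)$ together with $b_k - (1-\eps)\tau \le \eps(\gamma - h^\top \eta_k)$, exactly matching the LP in the statement. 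The main delicate points are the minimax interchange in step one and the strong-duality claim for the moment problem in step two; both are standard once the Slater-type condition $\mu \in \mathrm{int}(\Xi)$ is invoked, and the remaining manipulations are purely algebraic.
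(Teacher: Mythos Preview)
Your proposal is correct and follows essentially the same route as the paper: interchange $\sup_{\QQ}$ and $\inf_{\tau}$ via a minimax argument (the paper cites Sion's theorem using weak compactness of $\mc Q$, you instead restrict $\tau$ to a compact interval, both valid), then apply moment duality under the Slater condition $\mu \in \mathrm{int}(\Xi)$, followed by LP duality on the polytope $\Xi$. The only cosmetic difference is that the paper absorbs the $\eps^{-1}$ and $\tau$-shift up front by defining auxiliary constants $c_k(x), d_k(\tau)$ and invokes a packaged duality proposition, whereas you carry $\tilde\gamma, \tilde\lambda, \tilde\eta_k$ through and perform the rescaling $\gamma = \tau + \tilde\gamma/\eps$, $\lambda = \tilde\lambda/\eps$, $\eta_k = \tilde\eta_k/\eps$ at the end; the algebra is identical.
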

\begin{proof}
	By denoting $(a)^+ = \max\{0, a\}$, the worst-case CVaR can be expressed as
	\begin{subequations}
	\begin{align}
		&\Sup{\QQ \in \mc Q}~\QQ\text{-CVaR}_\eps \left( \max_{k \in \llbracket 1, K \rrbracket} \{ \xi^\top a_k(x) + b_k \} \right) \notag \\
=& \Sup{\QQ \in \mc Q}\Inf{\tau \in \R} \tau + \frac{1}{\eps} \EE_\QQ\left[\left( \max_{k \in \llbracket 1, K \rrbracket} \{ \xi^\top a_k(x) + b_k\} - \tau\right)^+ \right] \label{eq:cvar1} \\
=& \Sup{\QQ \in \mc Q}\Inf{\tau \in \R}  \EE_\QQ\left[ \max_{k \in \llbracket 0, K \rrbracket} \{ \xi^\top c_k(x) + d_k(\tau)\}  \right] \label{eq:cvar2} \\
=& \Inf{\tau \in \R} \Sup{\QQ \in \mc Q} \EE_\QQ\left[ \max_{k \in \llbracket 0, K \rrbracket} \{ \xi^\top c_k(x) + d_k(\tau)\}  \right], \label{eq:cvar3} 
	\end{align}
	\end{subequations}
	where equality~\eqref{eq:cvar1} exploits the definition of CVaR. In~\eqref{eq:cvar2}, we have defined
\[
	\begin{aligned}
	&c_0(x) = 0,  && d_0(\tau) = \tau , & \\
	&\ds c_k(x) = \frac{a_k(x)}{\eps}, && d_k(\tau) = \frac{b_k}{\eps} + \left(1 - \frac{1}{\eps}\right) \tau \; \forall k \in \llbracket 1, K \rrbracket.
	\end{aligned}
\]
Finally, the equality in~\eqref{eq:cvar3} holds by Sion's minimax theorem~\cite{ref:sion1958minimax} because the objective function is concave in $\QQ$, convex in $\tau$, and the ambiguity set $\mc Q$ is convex and weakly compact thanks to the compactness of the support set $\Xi$. For any fixed value $x$ and for any $\tau$, we can invoke Proposition~\ref{prop:duality} to rewrite the inner supremum problem in~\eqref{eq:cvar3}, we find
\begin{align*}
&\Sup{\QQ \in \mc Q}~\QQ\text{-CVaR}_\eps \left( \max_{k \in \llbracket 1, K \rrbracket} \{ \xi^\top a_k(x) + b_k \} \right) \\
&= \left\{
	\begin{array}{cll}
            \inf & \ds \gamma + \mu^\top \lambda \\ [2ex]
            \st & \gamma \in \R,\, \lambda \in \R^{|\mc I|}, \, \eta_k \in \R_+^{M} & \forall k \in \llbracket 0, K \rrbracket \\
            &\left. 
            \begin{array}{l}
            h^\top \eta_k \leq \gamma - d_k(\tau)  \\
            G^\top \eta_k  = c_k(x) - \lambda 
            \end{array}
            \right\}
            & \forall k \in \llbracket 0, K \rrbracket.
        \end{array}
\right.
\end{align*}
Substituting the expressions for $c_k(x)$ and $d_k(\tau)$ into the above optimization problem and rearranging terms lead to the postulated linear program. 
\end{proof}

From what we have discussed so far, the optimization problem
\be \label{eq:dro-approx-refor}
    \begin{array}{cl}
        \max & f(x) \\
        \st & x \in \mathbb X  \\
		& \gamma \in \R,\, \tau \in \R,\, \lambda \in \R^{|\mc I|}, \, \eta_k \in \R_+^{M} \quad \forall k \in \llbracket 0, K \rrbracket \\
            &\left. 
            \begin{array}{l}
            b_k  - (1 - \eps ) \tau \leq \eps (\gamma - h^\top \eta_k)   \\
            \eps (G^\top \eta_k + \lambda)  = a_k(x)
            \end{array}
            \right\}
            \quad \forall k \in \llbracket 1, K \rrbracket \\
		&\tau \leq \gamma - h^\top \eta_0, \quad G^\top \eta_0 + \lambda = 0 \\
				& \gamma + \mu^\top \lambda \leq 0
    \end{array} 
\ee
is a linear program and it constitutes a tractable conservative approximation of~\eqref{eq:dro} for any \textit{fixed} value $\alpha, \beta \in \R_{++}^{|\mc J| \times |\mc J|}$. More specifically, the optimal value of~\eqref{eq:dro-approx-refor} is a lower bound on the optimal value of~\eqref{eq:dro} and the optimizer $x\opt$ in the variable $x$ of problem~\eqref{eq:dro-approx-refor} is a feasible solution to~\eqref{eq:dro} for any \textit{fixed} value of strictly positive scaling parameters $\alpha, \beta$. One can systematically find the \textit{best} lower bound by jointly optimizing over $\alpha$ and $\beta$ in~\eqref{eq:dro-approx-refor}. Unfortunately, optimizing jointly over the decision variables of~\eqref{eq:dro-approx-refor} and $\alpha, \beta$  is a non-convex optimization problem because it involves the bilinear terms between $x$ and $\alpha, \beta$. Inspired by~\cite{ref:zymler2013distributionally}, we now devise an optimization algorithm that sequentially optimizes over $(\alpha, \beta)$ and the allocation $x$. To this end, we now consider for any \textit{fixed} allocation $x \in \mbb X$ the following optimization problem
\be \label{eq:ab-prob}
		\min \left\{ \mc R_{\alpha, \beta}(x) ~:~ (\alpha, \beta) \in \mc S \right\}
\ee
that searches for $(\alpha, \beta)$ that minimizes the worst-case CVaR of a given allocation $x$. To make problem~\eqref{eq:ab-prob} well-defined, we constrain $(\alpha, \beta)$ to a compact set $\mc S$ defined as
\be \label{eq:S-def}
	\mc S \Let
	\left\{
		\begin{array}{l}
		(\alpha, \beta) \in \R^{|\mc J| \times |\mc J|} \times \R^{|\mc J| \times |\mc J|}: \\
		\ds \sum_{(j, j') \in \mc J \times \mc J}~\alpha_{jj'} = 1,\sum_{(j, j') \in \mc J \times \mc J}~\beta_{jj'} = 1 \\
		\alpha_{jj'} \ge \delta,~\beta_{jj'} \ge \delta \quad \forall (j, j') \in \mc J \times \mc J
		\end{array}
	\right\}
\ee
for some strictly positive but small enough scalar $\delta$ so that $\mc S$ is non-empty. We emphasize that the overall scale of $\alpha$ and $\beta$ is immaterial, and thus normalizing the sum of the elements in $\alpha$ and $\beta$ to 1 in the definition of the set $\mc S$ does not restrict any generality. The generic sequential algorithm is depicted in Algorithm~\ref{algo}.
\begin{algorithm}
	\caption{Generic sequential algorithm}
	\label{algo}
	\begin{algorithmic}
		\REQUIRE Maximum iteration $T$, stopping tolerance $\theta$
		\STATE Initialize $g_0 \leftarrow +\infty$, $t \leftarrow 1$ 
		\STATE Initialize $\alpha_{jj'} \leftarrow 1/ |\mc J|^2$, $\beta_{jj'} \leftarrow 1/ |\mc J|^2 \, \forall (j, j')$
		\WHILE{ $t \le T$}
		\STATE Fix $(\alpha, \beta)$ and find a solution $(x_t, v_t)$ of
		\be \label{eq:x-sub}
			g_t = \left\{
				\begin{array}{cl}
					\max & f(x) -: \mathds{M} v \\
					\st & x \in \mbb X,~v \in \R_+,~\mc R_{\alpha, \beta}(x) \le v.
				\end{array}
			\right.
		\ee
		\STATE \textbf{if} $|g_t - g_{t-1}|/g_t < \theta$ \textbf{then} break \textbf{endif}
		\STATE Find $(\alpha, \beta)$ that solves~\eqref{eq:ab-prob} with $x$ being fixed to $x_t$
		\STATE Set $t \leftarrow t + 1$
		\ENDWHILE
		\ENSURE $x_t$
	\end{algorithmic}
\end{algorithm}

Problem~\eqref{eq:x-sub} can be written explicitly as a linear program in the form of problem~\eqref{eq:dro-approx-refor} two minor modifications: (i)  the right hand side of the last constraint of~\eqref{eq:dro-approx-refor} is replaced by the non-negative decision variable $v$, and (ii) the auxiliary variables is penalized with a {big-$\mathds{M}$} cost parameter in the objective function. This additional decision variable $v$ is necessary: indeed, it relaxes problem~\eqref{eq:dro-approx-refor} and renders problem~\eqref{eq:dro-approx-refor} feasible even when the scaling parameters $(\alpha, \beta)$ are badly initialized.
Moreover, problem~\eqref{eq:ab-prob} can be expressed in the equivalent form as 
\be \label{eq:ab-prob-explicit}
	\begin{array}{cl}
		\min & \gamma + \mu^\top \lambda \\
		\st & (\alpha, \beta) \in \mc S \\
			&\gamma \in \R,\, \tau \in \R,\, \lambda \in \R^{|\mc I|}, \, \eta_k \in \R_+^{M} \quad \forall k \in \llbracket 0, K \rrbracket \\ 
			&\left. 
            \begin{array}{l}
            b_k(\alpha, \beta)  - (1 - \eps ) \tau \leq \eps (\gamma - h^\top \eta_k)   \\
            \eps (G^\top \eta_k + \lambda)  = a_k(\alpha, \beta, x)
            \end{array}
            \right\}
            \quad \forall k \in \llbracket 1, K \rrbracket \\
		&\tau \leq \gamma - h^\top \eta_0, \quad G^\top \eta_0 + \lambda = 0,
	\end{array}
\ee
where $a_k(\alpha, \beta, x)$ and $b_k(\alpha, \beta)$ are defined as in~\eqref{eq:a-def} and~\eqref{eq:b-def}, respectively, with their dependence on $(\alpha, \beta)$ being made explicit. For any allocation $x$, $a_k(\alpha, \beta, x)$ and $b_k(\alpha, \beta)$ are affine functions of $\alpha$ and $\beta$, and hence problem~\eqref{eq:ab-prob-explicit} is a linear program.

The sequence $\{g_t \}_{t \in \mbb N}$ is non-decreasing and thus it is guaranteed to converge. If Algorithm~\ref{algo} terminates with $v_t = 0$, then the allocation $x_t$ is feasible for problem~\eqref{eq:dro} and the terminal value of $g_t$ is a lower bound for the objective value of~\eqref{eq:dro}.

%%%%%%%%%%%%%%%%%%%%%%%%%%%%%%%%
\section{Tractable Approximations for Hyperrectangular Support Set}
\label{sect:hypercube}

Based on the general results of the previous section, we consider now the hyperrectangular support set, where $\Xi$ admits the following specific representation
\be \label{eq:Xi-box}
	\Xi = \left\{ \xi \in \R^{|\mc I|} : l \le \xi \le u \right\}
\ee
for some lower bound vector $l \in \R^{|\mc I|}$ and upper bound vector $u \in \R^{|\mc I|}$ satisfying $l < u$. This hyperrectangular set~\eqref{eq:Xi-box} is useful and practical in many ways. First, prescribing the set $\Xi$ in this case requires the estimation of the upper bound $u_i$ and lower bound $l_i$ for each task $i \in \mc I$, which can be done efficiently given any available dataset. Furthermore, this set $\Xi$ is intuitive, and it is easy to be communicated and explained to the stakeholders involved in the decision making process, including the healthcare workers, the managers of the healthcare unit and the human resources department. Finally, even though the set $\Xi$ may look over-simplified, this choice of the support set has been used widely in the literature and has shown promising performance in inventory management with uncertain demand~\cite{ref:see2010robust}, in vehicle routing problem with uncertain demand or travelling time~\cite{ref:nguyen2016satisficing}. Our aim in this section is to detail the algorithm and the involved optimization problems tailored for the hyperrectangular support set to facilitate the implementation of this model for practical purposes.

When $\Xi$ is a hyperrectangle, the value of the worst-case CVaR~$\mc R_{\alpha, \beta}(x)$ can be reformulated explicitly using the following corollary.
\begin{corollary}[Reformulation for hyperrectangular support set]
	Suppose that the support set $\Xi$ is a hyperrectangle as in~\eqref{eq:Xi-box}. For any fixed allocation $x \in \R^{|\mc I| \times | \mc J|}$, scaling parameters $\alpha \in \R_{++}^{|\mc J| \times |\mc J|}$ and $\beta \in \R_{++}^{|\mc J| \times |\mc J|}$, the worst-case CVaR value $\mc R_{\alpha, \beta}(x)$ equals to the optimal value of the linear optimization problem
\[
	\begin{array}{cl}
            \min & \ds \gamma + \mu^\top \lambda \\ [2ex]
            \st & \gamma \in \R,\, \tau \in \R,\, \lambda \in \R^{|\mc I|}, \\
		& \eta_{1k} \in \R_+^{|\mc I|}, \, \eta_{2k} \in \R_+^{| \mc I|} \quad \forall k \in \llbracket 0, K \rrbracket \\
            &\left. 
            \begin{array}{l}
		b_k  - (1 - \eps ) \tau \leq\eps (\gamma - u^\top \eta_{1k} + l^\top \eta_{2k}) \\
            \eps (\eta_{1k} - \eta_{2k} + \lambda)  = a_k(x)  
            \end{array}
            \right\}
             \forall k \in \llbracket 1,  K \rrbracket \\
		&  \tau \leq \gamma - u^\top \eta_{10} + l^\top \eta_{20}, \quad \eta_{10} - \eta_{20} + \lambda = 0.
        \end{array}
\]
\end{corollary}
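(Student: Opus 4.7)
The plan is to specialize Theorem~\ref{thm:refor} to the specific polyhedral representation of the hyperrectangle. First, I would observe that the set $\Xi = \{\xi \in \R^{|\mc I|} : l \le \xi \le u\}$ admits the polyhedral description~\eqref{eq:Xi-def} with the choice
\[
	G = \begin{pmatrix} I_{|\mc I|} \\ -I_{|\mc I|} \end{pmatrix} \in \R^{2|\mc I| \times |\mc I|}, \qquad h = \begin{pmatrix} u \\ -l \end{pmatrix} \in \R^{2 |\mc I|},
\]
so that in this case the number of linear inequalities is $M = 2 |\mc I|$. Since $l < u$ by assumption and $\mu \in \mathrm{int}(\Xi)$ becomes $l < \mu < u$, the hypotheses of Theorem~\ref{thm:refor} are satisfied.

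Next, I would split each dual vector $\eta_k \in \R_+^{M}$ appearing in Theorem~\ref{thm:refor} into two non-negative blocks according to the block structure of $G$ and $h$: writing $\eta_k = (\eta_{1k}, \eta_{2k})$ with $\eta_{1k}, \eta_{2k} \in \R_+^{|\mc I|}$, one directly obtains
\[
	h^\top \eta_k = u^\top \eta_{1k} - l^\top \eta_{2k}, \qquad G^\top \eta_k = \eta_{1k} - \eta_{2k}.
\]
Substituting these two identities into the constraint system of Theorem~\ref{thm:refor} (for each index $k \in \llbracket 1, K \rrbracket$ and for the auxiliary index $k = 0$) yields exactly the constraints stated in the corollary, while the objective $\gamma + \mu^\top \lambda$ is unchanged.

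The correspondence between the feasible sets of the two linear programs is bijective: every $\eta_k \in \R_+^{2|\mc I|}$ is uniquely written as a pair $(\eta_{1k}, \eta_{2k}) \in \R_+^{|\mc I|} \times \R_+^{|\mc I|}$ and vice versa. Consequently, the optimal values of the two linear programs coincide, and the desired equality $\mc R_{\alpha,\beta}(x) = \text{(optimal value of the stated LP)}$ follows from Theorem~\ref{thm:refor}. The argument is essentially a direct substitution, so I do not expect any conceptual difficulty; the only minor point to verify is that the block decomposition preserves the sign constraints on the dual variables, which is immediate because $\R_+^{2|\mc I|} = \R_+^{|\mc I|} \times \R_+^{|\mc I|}$.
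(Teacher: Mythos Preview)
Your proposal is correct and matches the paper's own argument essentially verbatim: the paper also derives the corollary directly from Theorem~\ref{thm:refor} by writing the hyperrectangle via $G = \begin{pmatrix} I \\ -I \end{pmatrix}$ and $h = \begin{pmatrix} u \\ -l \end{pmatrix}$ with $M = 2|\mc I|$, and then splitting each $\eta_k$ into the two blocks $(\eta_{1k}, \eta_{2k})$.
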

This corollary follows directly from Theorem~\ref{thm:refor} by noticing that the support set $\Xi$ in the form~\eqref{eq:Xi-box} can be expressed in the general form~\eqref{eq:Xi-def} with $M = 2 | \mc I|$ constraints using
\[
	G = \begin{bmatrix} 
		I \\
		-I
	\end{bmatrix} \in \R^{2| \mc I| \times | \mc I|},
	\quad
	h = \begin{pmatrix}
		u \\
		-l
	\end{pmatrix} \in \R^{2 | \mc I|}.
\]
Furthermore, when $\Xi$ is dictated by~\eqref{eq:Xi-box}, then for any fixed scaling parameters $\alpha, \beta \in \R_{++}^{| \mc J | \times | \mc J|}$, the optimal value of the program
\be \label{eq:dro-approx-refor-box}
    \begin{array}{cl}
        \max & f(x) \\
        \st & x \in \mathbb X, \, \gamma \in \R,\, \tau \in \R,\, \lambda \in \R^{|\mc I|}, \\
		& \eta_{1k} \in \R_+^{|\mc I|}, \, \eta_{2k} \in \R_+^{| \mc I|} \quad \forall k \in \llbracket 0, K \rrbracket \\
            &\left. 
            \begin{array}{l}
		b_k  - (1 - \eps ) \tau \leq\eps (\gamma - u^\top \eta_{1k} + l^\top \eta_{2k}) \\
            \eps (\eta_{1k} - \eta_{2k} + \lambda)  = a_k(x)  
            \end{array}
            \right\}
             \forall k \in \llbracket 1,  K \rrbracket \\
		&  \tau \leq \gamma - u^\top \eta_{10} + l^\top \eta_{20}, \quad \eta_{10} - \eta_{20} + \lambda = 0 \\
		& \gamma + \mu^\top \lambda \leq 0
    \end{array} 
\ee
provides a valid lower bound on the optimal value of problem~\eqref{eq:dro}. We are now ready to adapt the generic procedure delineated in Algorithm~\ref{algo} to sequentially solve over the decision $x$ and the scaling parameters $\alpha$ and $\beta$ in order to find a competitive solution for problem~\eqref{eq:dro}. Notice that $K = 2|\mc J|^2$, and the optimization problem~\eqref{eq:x-sub} over the decision variables $x$ with $(\alpha, \beta)$ being fixed can be written using the summation notations as
\be \label{eq:fix-alpha-beta}
\begin{array}{cl}
	\max & f(x) - \mathds{M} v \\
	\st & x \in \mathbb X, \, v \in \R_+, \, \gamma \in \R,\, \tau \in \R \\
	& \eta_{1jj'i}^- \in \R_+, \, \eta_{2jj'i}^- \in \R_+ \quad \forall (j, j', i) \in \mc J \times \mc J \times \mc I \\
	& \eta_{1jj'i}^+ \in \R_+, \, \eta_{2jj'i}^+ \in \R_+ \quad \forall (j, j', i) \in \mc J \times \mc J \times \mc I \\
	& \eta_{10i} \in \R_+, \, \eta_{20i} \in \R_+, \, \lambda_i \in \R \quad \forall i \in \mc I\\
	&\left. 
	\begin{array}{l}
		-(1-\eps)\tau - \alpha_{jj'}\Delta \leq \eps \big(\gamma - \ds\sum_{i} (u_i \eta_{1jj'i}^- - l_i \eta_{2jj'i}^-)\big)  \\
		-(1-\eps)\tau - \beta_{jj'}\Delta \leq  \eps \big(\gamma - \ds\sum_{i} (u_i \eta_{1jj'i}^+ - l_i \eta_{2jj'i}^+)\big) 
	\end{array}
	\right\}
	\forall (j,j') \\
	& \left. 
	\begin{array}{l}
		\eps(\eta_{1jj'i}^- - \eta_{2jj'i}^- + \lambda_i) = \alpha_{jj'}(x_{ij} - x_{ij'}) \\
		\eps(\eta_{1jj'i}^+ - \eta_{2jj'i}^+ + \lambda_i) = \beta_{jj'}(x_{ij'} - x_{ij})
	\end{array}
	\right\}
	\forall (j,j',i)\\
	& \tau \leq \gamma - \ds\sum_i(u_i \eta_{10i} - l_i \eta_{20i})\\
	& \eta_{10i} - \eta_{20i} + \lambda_i = 0 \quad \forall i \in \mc I \\
	& \gamma + \ds \sum_i \mu_i \lambda_i \leq v,
\end{array} 
\ee
where the notation $\forall(j, j', i)$ in the constraints means $\forall (j, j', i) \in \mc J \times \mc J \times \mc I$. Problem~\eqref{eq:fix-alpha-beta} is a mixed binary optimization problem because of the binary requirements of the assignment feasible set $\mbb X$. 

Similarly, the optimization problem~\eqref{eq:ab-prob} that solves over $\alpha$ and $\beta$ with the values of $x$ being fixed can be written using the summation notations as
\be \label{eq:fix-x}
	\begin{array}{cl}
	\min & \gamma + \ds \sum_i \mu_i \lambda_i \\
		\st & (\alpha, \beta) \in \mc S,\, \gamma \in \R,\, \tau \in \R \\
	& \eta_{1jj'i}^- \in \R_+, \, \eta_{2jj'i}^- \in \R_+ \quad \forall (j, j', i) \in \mc J \times \mc J \times \mc I \\
	& \eta_{1jj'i}^+ \in \R_+, \, \eta_{2jj'i}^+ \in \R_+ \quad \forall (j, j', i) \in \mc J \times \mc J \times \mc I \\
	& \eta_{10i} \in \R_+, \, \eta_{20i} \in \R_+, \, \lambda_i \in \R \quad \forall i \in \mc I\\
	&\left. 
	\begin{array}{l}
		-(1-\eps)\tau - \alpha_{jj'}\Delta \leq  \eps \big(\gamma - \ds\sum_{i} (u_i \eta_{1jj'i}^- - l_i \eta_{2jj'i}^-)\big) \\
		-(1-\eps)\tau - \beta_{jj'}\Delta \leq   \eps \big(\gamma - \ds\sum_{i} (u_i \eta_{1jj'i}^+ - l_i \eta_{2jj'i}^+)\big) 
	\end{array}
	\right\}
	\forall (j,j') \\
	& \left. 
	\begin{array}{l}
		\eps(\eta_{1jj'i}^- - \eta_{2jj'i}^- + \lambda_i) = \alpha_{jj'}(x_{ij} - x_{ij'}) \\
		\eps(\eta_{1jj'i}^+ - \eta_{2jj'i}^+ + \lambda_i) = \beta_{jj'}(x_{ij'} - x_{ij})
	\end{array}
	\right\}
	\forall (j,j',i)\\
	& \tau \leq \gamma - \ds\sum_i(u_i \eta_{10i} - l_i \eta_{20i})\\
	& \eta_{10i} - \eta_{20i} + \lambda_i = 0 \quad \forall i \in \mc I,
	\end{array}
\ee
where the feasible set $\mc S$ is defined as in~\eqref{eq:S-def}. Furthermore, problem~\eqref{eq:fix-x} is a linear continuous optimization problem, and techniques to warm-start problem~\eqref{eq:fix-x} can be applied to reduce the computational solution time in the iterations of Algorithm~\ref{algo:hyper}.

\begin{algorithm}
	\caption{Specific sequential algorithm for hypercube support set}
	\label{algo:hyper}
	\begin{algorithmic}
		\REQUIRE Maximum iteration $T$, stopping tolerance $\theta$
		\STATE Initialize $g_0 \leftarrow +\infty$, $t \leftarrow 1$ 
		\STATE Initialize $\alpha_{jj'} \leftarrow 1/ |\mc J|^2$, $\beta_{jj'} \leftarrow 1/ |\mc J|^2 \, \forall (j, j')$
		\WHILE{ $t \le T$}
		\STATE Fix $(\alpha, \beta)$ and find a solution $(x_t, v_t)$ of problem~\eqref{eq:fix-alpha-beta}
		\STATE Set $g_t$ to the optimal value of problem~\eqref{eq:fix-alpha-beta}
		\STATE \textbf{if} $|g_t - g_{t-1}|/g_t < \theta$ \textbf{then} break \textbf{endif}
		\STATE Find $(\alpha, \beta)$ that solves~\eqref{eq:fix-x} with $x$ being fixed to $x_t$
		\STATE Set $t \leftarrow t + 1$
		\ENDWHILE
		\ENSURE $x_t$
	\end{algorithmic}
\end{algorithm}
%%%%%%%%%%%%%%%%%%%%%%%%%%%%%%%%%%%%%%
\section{Numerical Experiment}
\label{sect:numerical}

We now showcase the prowess of the distributionally robust optimization framework to promote fair assignments using numerical experiments. To reduce the complexity of the experiments, we concentrate on the hyperrectangular support set as presented in Section~\ref{sect:hypercube}, and find the fair assignment using Algorithm~\ref{algo:hyper}. All codes are implemented in MATLAB and are made available upon request, and the linear optimization problems are solved using Gurobi~9. Due to the nondisclosure agreement, we are unable to report the performance with real-life data, thus we report in this paper only results using synthetic data.

As a benchmark, we propose to compare the solution obtained from Algorithm~\ref{algo:hyper} against the optimal solution of the optimization problem using the expected values as input
\be \label{eq:mean}
\begin{array}{cl}
	\max & f(x) - \mathds{M} v \\
	\st & x \in \mathbb X, \, v \in \R_+ \\
	& \ds -\Delta - v \leq \sum_{i} \mu_i (x_{ij} - x_{ij'}) \leq \Delta + v \quad
	\forall (j,j'),
\end{array} 
\ee
where, once again, the variable $v$ is added to ensure the feasibility of the optimization problem with a penalization parameter big-$\mathds M$ in the objective function. Note that the random quantities $\xi$ are replaced by the mean values $\mu$ in the constraints of problem~\eqref{eq:mean}. 

The numerical experiment is conducted as follows. We set $| \mc I | = 20$ tasks with $| \mc J | = 5$ workers. We generate the vector $\mu \in \R^{20}$ where each element is independent and identically distributed (i.i.d.) with a uniform distribution in $[0, 100]$, and the lower bound $l = \mu - r$ and the upper bound $u = \mu + r$, where $r = \min\{\mu, r'\}$ and $r'$ is a random vector whose elements are i.i.d. with a uniform distribution on $[0, 3]$. The minimum operator in the definition of $r$ is required to make the values of the lower bound $l$ non-negative. The reward $r_{ij}$ is generated i.i.d.~from a uniform distribution on $[0, 100]$ for any $(i, j) \in \mc I \times \mc J$. We set the threshold $\Delta = 5$ and the tolerance $\eps = 5\%$, and then invoke Algorithm~\ref{algo:hyper} to obtain the distributionally robust assignment with $T = 40$ and $\theta = 10^{-4}$, and solve problem~\eqref{eq:mean} to obtain the assignment using the expected values. 

In the test process, we generate 10,000 samples $\{\wh \xi_k\}$ from a uniform distribution on $[l, u]$, where each element is drawn independently of each other. From the $k$-th generated sample $\wh \xi_k$, we record the realized total time that worker $j$ needs to finish all the assigned tasks
\[
	\wh T_j = 	\sum_{i \in \mc I} \wh \xi_{ki} x_{ij},
\]
where $x$ is either the distributionally robust assignment or the assignment using mean values. The spread in the total time, which is a measurement of the degree of fairness in the assignment, is computed as
\be \label{eq:spread}
	s_k = \Max{j} \wh T_j - \Min{j} \wh T_j.
\ee
If this spread exceeds the threshold, that is, $s_k > \Delta$, then the realization $\wh \xi_k$ generates a failure to the task assignment plan. The probability that an assignment fails to satisfy the fairness constraint is computed empirical as the total number of failure scenarios over 10,000 overall samples. 

The aforementioned procedure is replicated 500 times to obtain 500 values of the violation probability for the two approaches. The histogram comparing the probability of violation is presented in Figure~\ref{fig:prob}. One can observe that the distributionally robust optimization approach successfully promotes the fairness in the assignment by significantly reducing the probability that the total time spread is bigger than the threshold $\Delta$. Averaging over 500 replications, the probability of violation is 1.37\% for the distributionally robust optimization and is 61.73\% for the optimization using the expected values approach. Notice that the average probability of 1.37\% is significantly lower than the prescribed tolerance of $\eps = 5\%$. This effect stems from the combination of the distributionally robust approach and the conservativeness of the joint chance constraint approximation.

Figure~\ref{fig:spread} compares the spreads of one particular replication using 10,000 samples. It is obvious that the spreads is smaller for the distributionally robust approach compared to the optimization using the expected values approach. Furthermore, from Figure~\ref{fig:spread-a}, the empirical spread is kept below the threshold $\Delta = 5$ with high probability, which demonstrates the effectiveness of the distributionally robust optimization approach in satisfying the joint chance constraint.

We emphasize that guaranteeing the fairness in the task assignment does not come for free. Indeed, a fair assignment which is obtained by solving the distributionally robust optimization problem often results in a lower reward compared to the optimization with the expected values approach. Figure~\ref{fig:reward} depicts the rewards from each approach taken over 500 replications. The mean reward for the distributionally robust optimization approach is 527.62, and this quantity for the optimization using the expected values is 626.90.

\begin{figure}
	\centering
	\subfloat[Distributionally robust optimization approach.]{%
		\resizebox*{6.5cm}{!}{\includegraphics{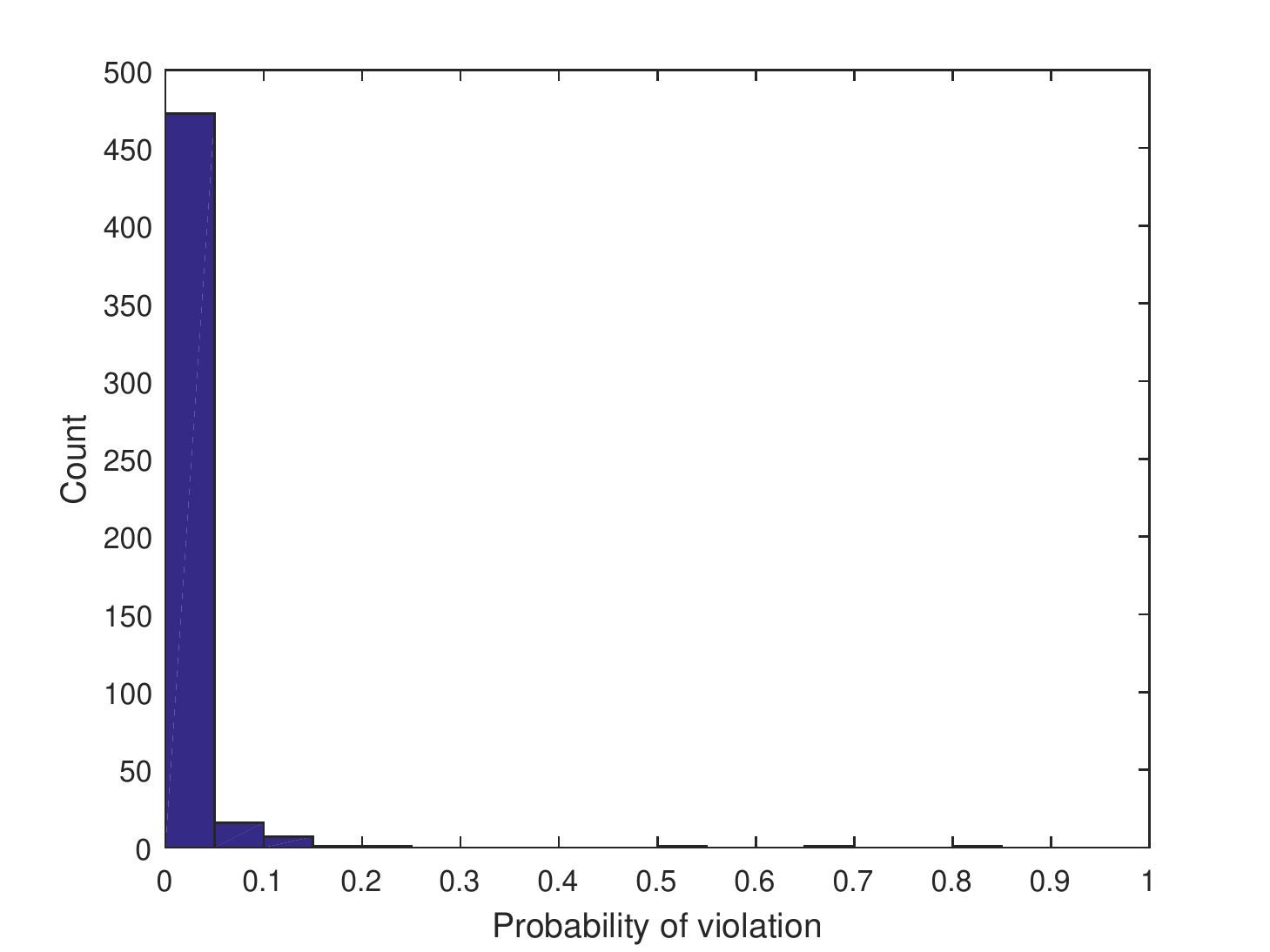}}}\hspace{5pt}
	\subfloat[Optimization with expected values approach.]{%
		\resizebox*{6.5cm}{!}{\includegraphics{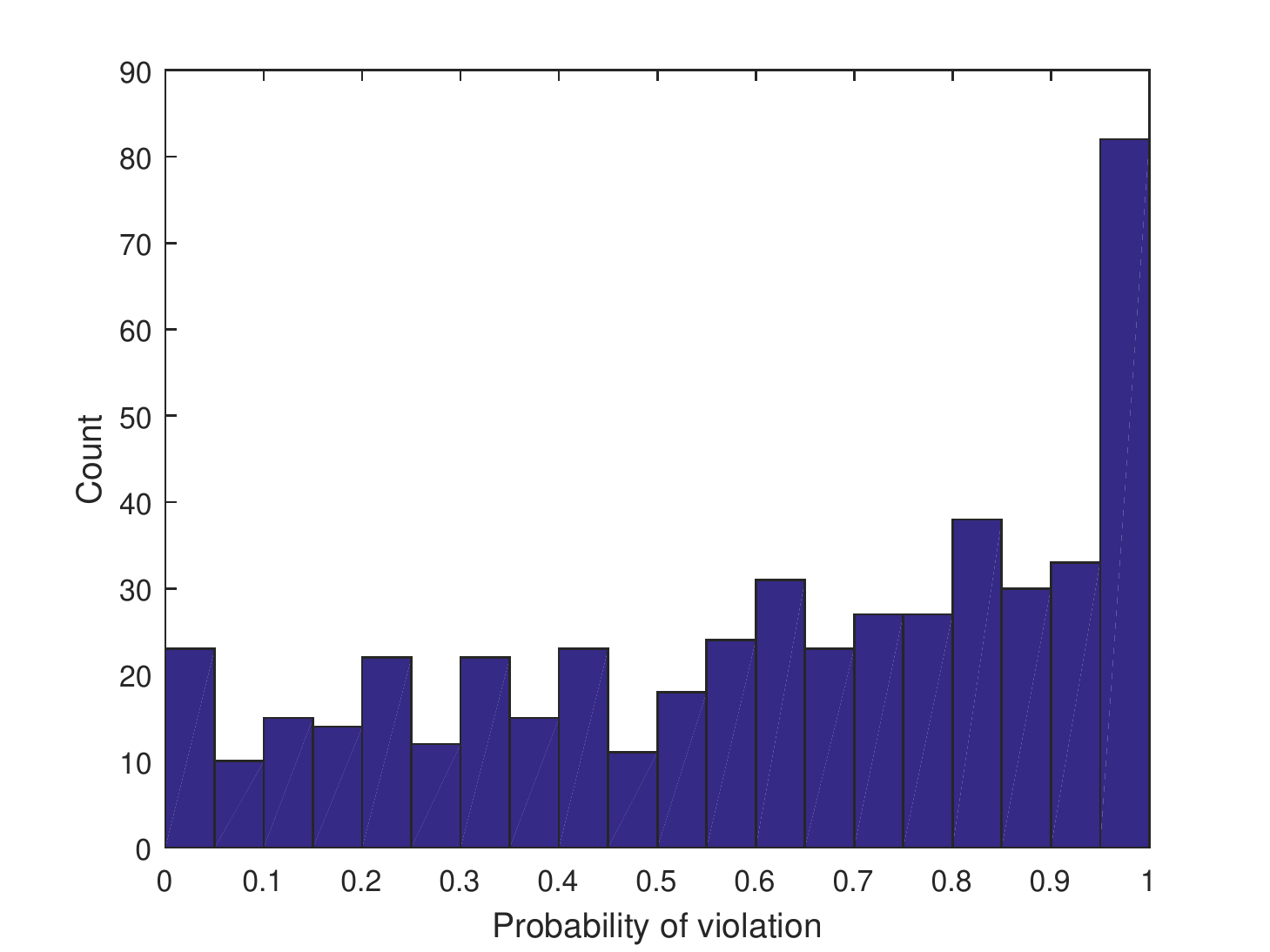}}}
	\caption{Probability of violation taken over 500 replications for both approaches.}
	\label{fig:prob}
\end{figure}

\begin{figure}
	\centering
	\subfloat[Distributionally robust optimization approach.]{%
		\resizebox*{6.5cm}{!}{\includegraphics{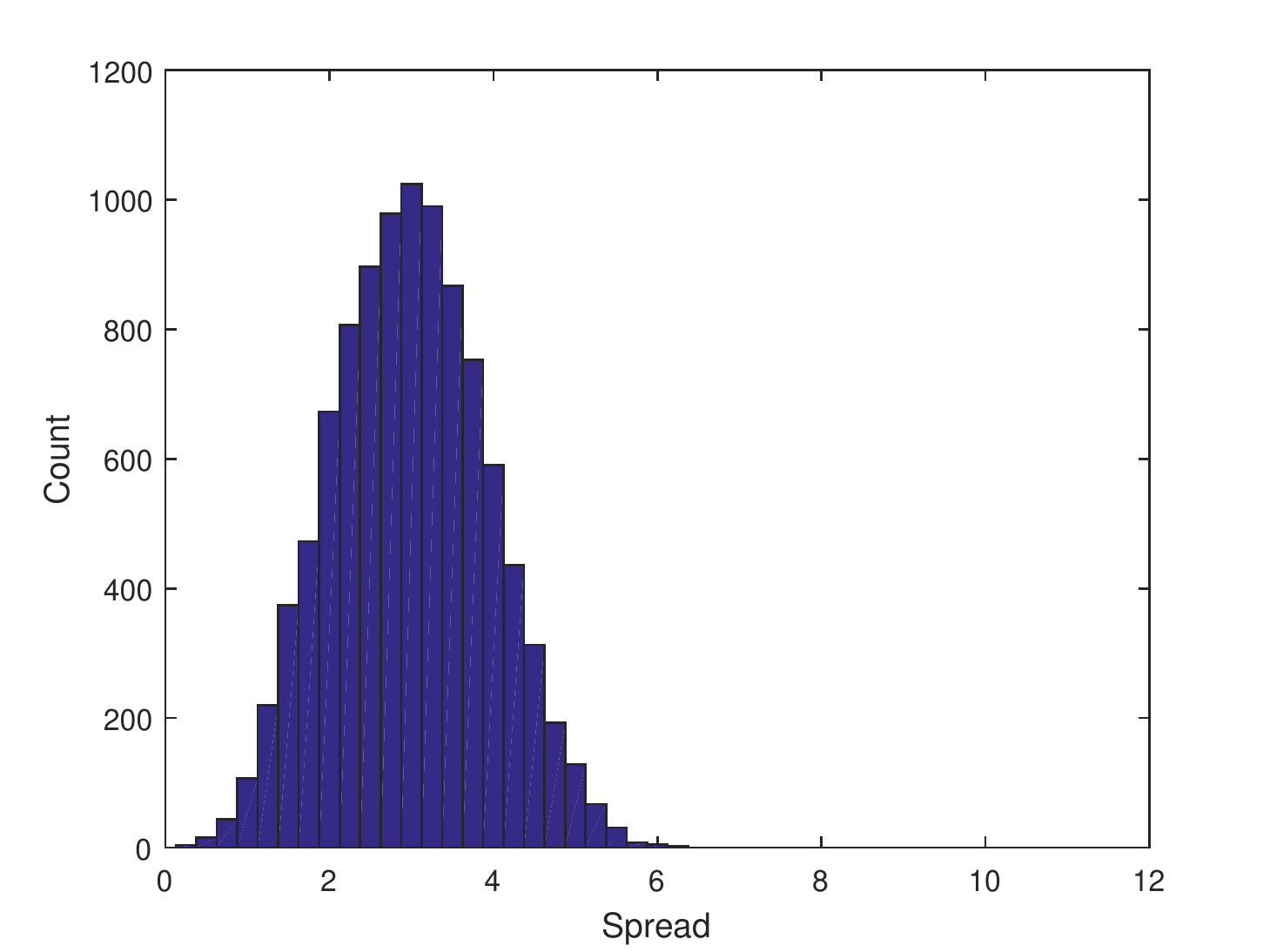} \label{fig:spread-a}}}\hspace{5pt}
	\subfloat[Optimization with expected values approach.]{%
		\resizebox*{6.5cm}{!}{\includegraphics{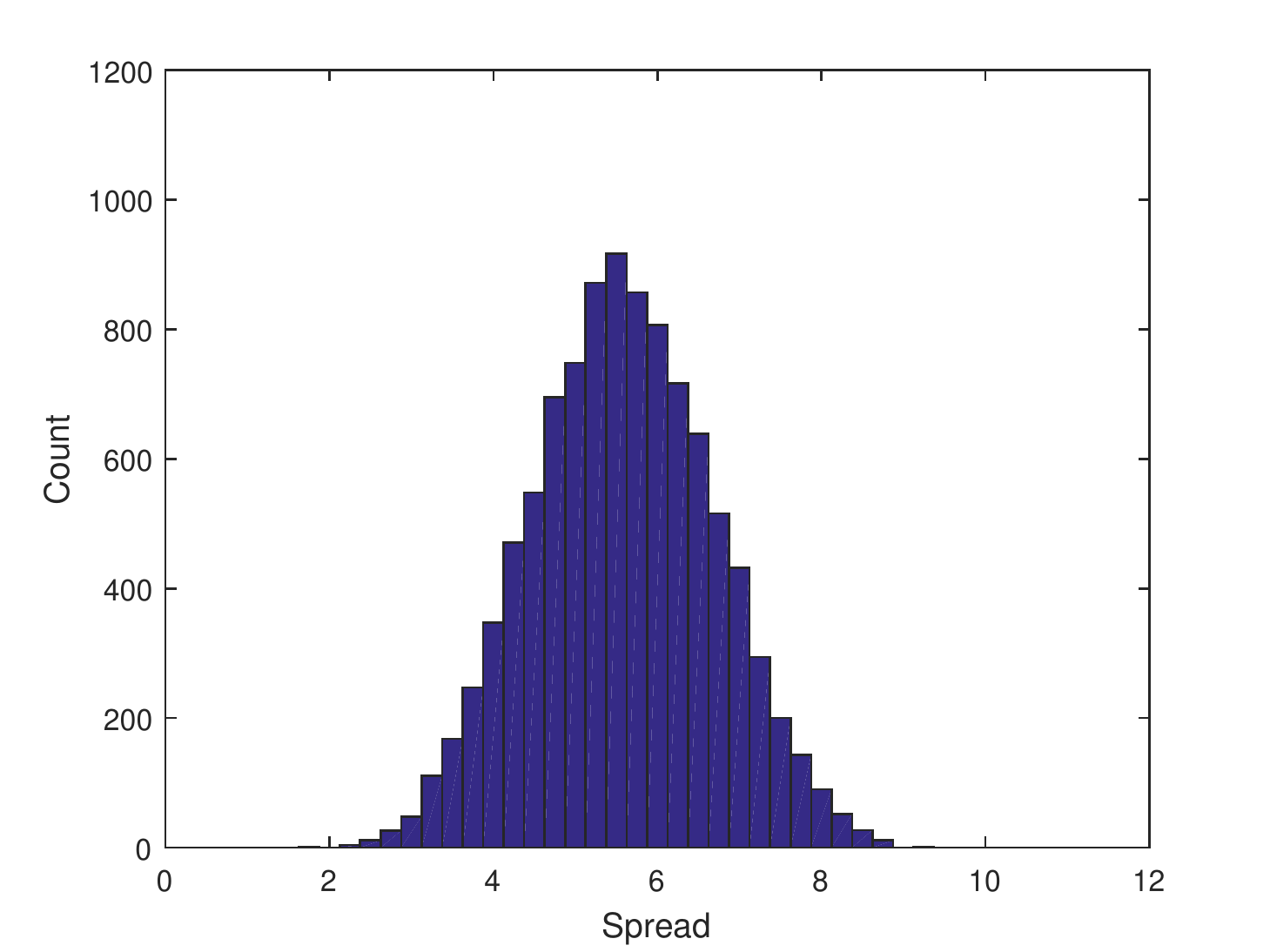}}}
	\caption{Spreads, calculated in unit time as in~\eqref{eq:spread} of a particular replication with 10,000 samples.}
	\label{fig:spread}
\end{figure}

\begin{figure}
	\centering
	\subfloat[Distributionally robust optimization approach.]{%
		\resizebox*{6.5cm}{!}{\includegraphics{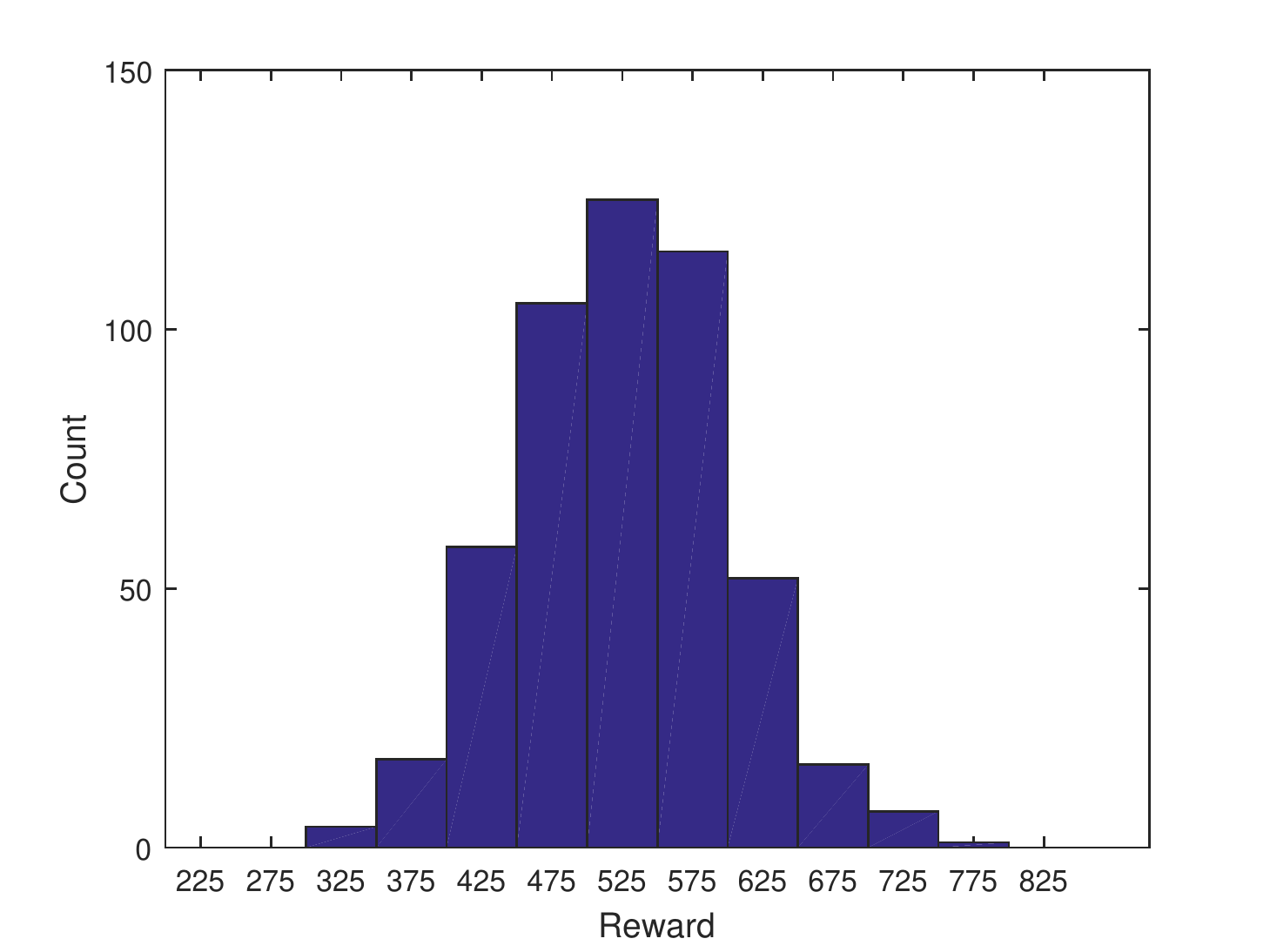} \label{fig:reward-a}}}\hspace{5pt}
	\subfloat[Optimization with expected values approach.]{%
		\resizebox*{6.5cm}{!}{\includegraphics{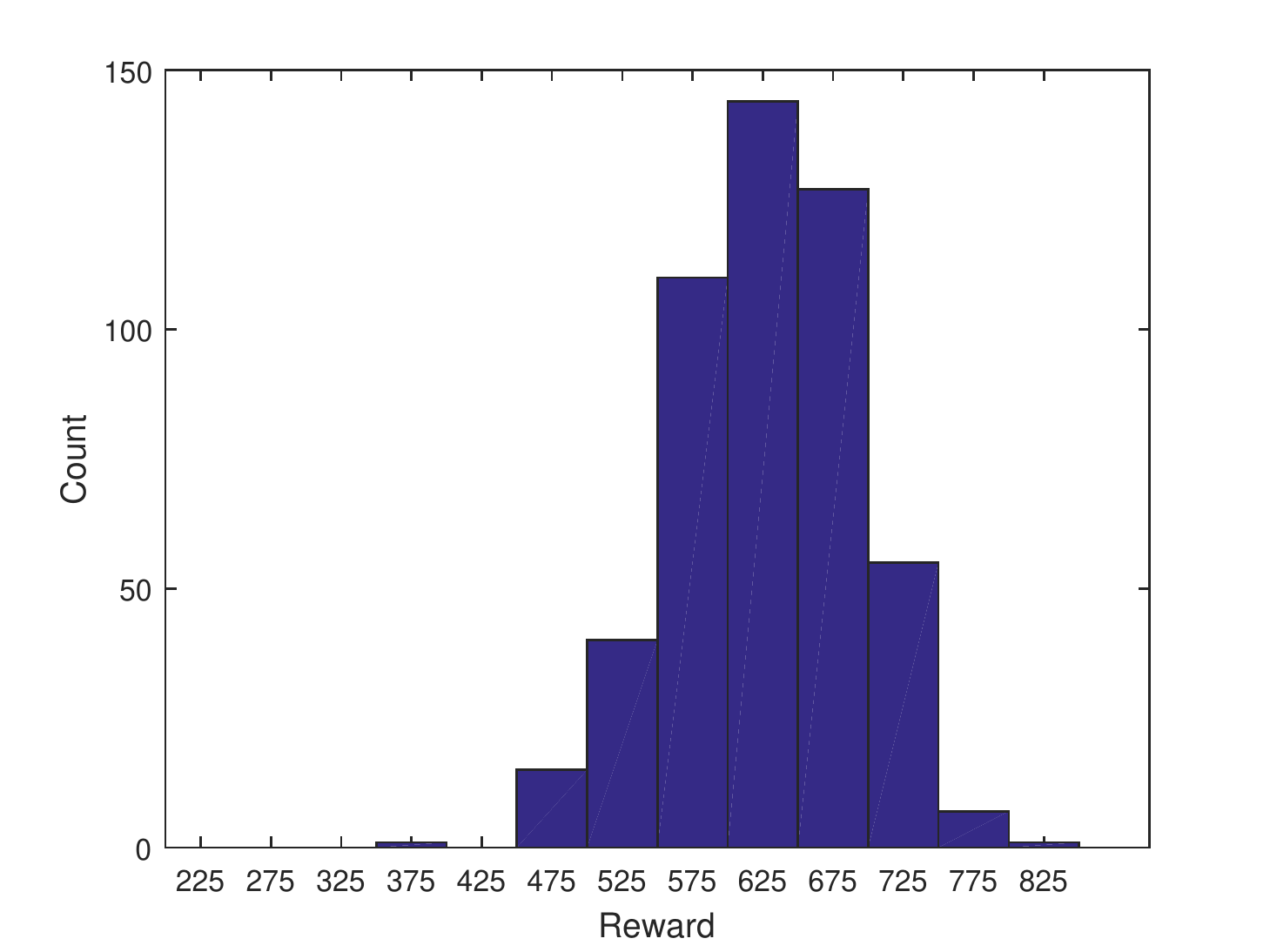}}}
	\caption{Rewards taken over 500 replications for both approaches.}
	\label{fig:reward}
\end{figure}

%%%%%%%%%%%%%%%%%%%%%%%%%%%%%%%%%%%%%%
\section{Concluding Remarks}
\label{sect:conclusions}

In this paper, we explore how the methodology of distributionally robust optimization can be applied to generate fair task assignment. The notion of fairness is measured by the difference in the time the workers are required to finish the assigned tasks. We induce a fair assignment by imposing a joint chance constraint that bounds the time difference by a certain threshold with high probability, and we formulate the distributionally robust optimization problem with joint chance constraint to alleviate the depedence on the identification of the true underlying distribution, which is sometimes even impossible. We develop a conservative approximation of the joint chance constraint using Conditional Value-at-Risk, and propose a sequential optimization algorithm to efficiently solve the resulting reformulations. Finally, we tailor the result to a hyperrectangular support set and report promising numerical results using synthetic datasets. The results in this paper are a substantial part of the efforts from the hospital management team of the Hanoi Obstetrics and Gynecology Hospital to improve the job satisfaction of our healthcare workers, which in consequence aims to improve the quality of our healthcare service. 

The results in this paper reveal several directions for future research. While this paper focuses on the mean-support ambiguity set, many other types of ambiguity sets are available and can be used to promote fairness in the allocation of tasks to healthcare workers. These new ambiguity sets include, but is not restricted to, the ambiguity sets prescribed using the $\phi$-divergence~\cite{ref:ben2015deriving} and the Wasserstein distance~\cite{ref:kuhn2019wasserstein}. Moreover, as we have noted in Section~\ref{sect:numerical}, the assignment using the distributionally robust optimization approach usually have lower rewards compared to other approaches which is less stringent on the fairness criterion. However, this reduction in the reward can be relieved in a certain extent by injecting more flexibility in the recourse actions to adapt to the realization of the uncertain quantity as the plan rolls over. This approach can be implemented using a re-optimization on the second-stage decisions, which has been first demonstrated in the power systems scheduling setting~\cite{ref:ordoudis2018energy} and can be potentially applied to this problem. We leave these ideas for future research.
%%%%%%%%%%%%%%%%%%%%%%%%%%%%%%%%%%%%%%

%%%%%%%%%%%%%%%%%%%%%%%%%%%%%%%%%%%%%%
\section*{Proofs}
\label{sect:appendix}
\begin{proposition}[Strong duality] \label{prop:duality}
    Suppose that the ambiguity set $\mc Q$ is defined as in~\eqref{eq:ambi-def} and that the loss function $\ell(\xi)$ is a convex, piecewise affine function of the form $\ell(\xi) = \max_{k \in \llbracket 0, K \rrbracket} \{ c_k^\top \xi + d_k \}$ for some $c_k \in \R^{|\mc I|}$ and $d_k \in \R~\forall k \in \llbracket 0, K \rrbracket$. The worst-case expected loss can be reformulated as
    \begin{align*}
        &\Max{\QQ \in \mc Q}~\EE_{\QQ}[\ell(\xi)] = \left\{
            \begin{array}{cll}
            \min & \ds \gamma + \mu^\top \lambda \\ [2ex]
            \st & \gamma \in \R,\, \lambda \in \R^{|\mc I|}, \, \eta_k \in \R_+^{M} & \forall k \in \llbracket 0, K \rrbracket \\
            &\left. 
            \begin{array}{l}
            h^\top \eta_k \leq \gamma - d_k  \\
            G^\top \eta_k  = c_k - \lambda 
            \end{array}
            \right\}
            & \forall k \in \llbracket 0, K \rrbracket.
        \end{array}
        \right.
    \end{align*}
\end{proposition}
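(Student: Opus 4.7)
The plan is to derive the dual by treating the worst-case expectation as a generalized moment problem and applying Lagrangian duality, followed by eliminating the semi-infinite constraint on $\xi \in \Xi$ via classical LP duality. Writing the primal as
\begin{equation*}
\sup_{\QQ \ge 0}~\Big\{ \int_{\Xi} \ell(\xi) \QQ(d\xi):~\int_{\Xi} \QQ(d\xi) = 1,\,\int_{\Xi} \xi\, \QQ(d\xi) = \mu \Big\},
\end{equation*}
I would assign a scalar Lagrange multiplier $\gamma \in \R$ to the normalization constraint and a vector multiplier $\lambda \in \R^{|\mc I|}$ to the mean constraint. Collecting terms in the Lagrangian gives $\gamma + \mu^\top \lambda + \sup_{\QQ \ge 0} \int_{\Xi}(\ell(\xi) - \gamma - \lambda^\top \xi) \QQ(d\xi)$. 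Since $\QQ$ ranges over nonnegative (not necessarily probability) measures on $\Xi$, the inner supremum is $0$ if $\ell(\xi) \le \gamma + \lambda^\top \xi$ for all $\xi \in \Xi$ and $+\infty$ otherwise. This yields the semi-infinite dual
\begin{equation*}
\inf_{\gamma,\lambda}~\big\{\gamma + \mu^\top \lambda:~c_k^\top \xi + d_k \le \gamma + \lambda^\top \xi \;\forall \xi \in \Xi,\; \forall k \in \llbracket 0,K\rrbracket\big\}.
\end{equation*}

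Next, I would eliminate the semi-infinite constraint index by $\xi \in \Xi$. For each fixed $k$, the constraint is equivalent to $\max_{\xi \in \Xi}\,(c_k - \lambda)^\top \xi \le \gamma - d_k$. Because $\Xi = \{\xi : G\xi \le h\}$ is a (nonempty, bounded) polyhedron, LP strong duality gives
\begin{equation*}
\max\{(c_k - \lambda)^\top \xi : G\xi \le h\} = \min\{h^\top \eta_k : \eta_k \in \R^M_+,\, G^\top \eta_k = c_k - \lambda\},
\end{equation*}
so the $k$-th semi-infinite constraint admits the finite reformulation: there exists $\eta_k \in \R^M_+$ with $G^\top \eta_k = c_k - \lambda$ and $h^\top \eta_k \le \gamma - d_k$. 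Substituting these equivalents produces exactly the minimization problem in the statement.

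The main obstacle is justifying strong duality between the primal moment problem and its Lagrangian dual, since this is precisely what closes the chain of equalities. I would invoke the classical strong duality theorem for generalized moment problems (see, e.g., Isii or Shapiro), for which the sufficient Slater-type condition is that the mean vector $\mu$ lies in the relative interior of the set of feasible moments, which in turn is implied here by the standing assumption $\mu \in \mathrm{int}(\Xi)$, i.e.\ $G\mu < h$. Concretely, this condition guarantees that $\mc Q$ is nonempty (the Dirac mass at $\mu$ belongs to $\mc Q$) and, combined with the compactness of $\Xi$ (which ensures weak compactness of $\mc Q$ and hence finiteness of the primal value), rules out a duality gap. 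Once strong duality is established, no gap can appear at the LP-duality step either because $\Xi$ is a bounded polyhedron, and the postulated reformulation follows by chaining the two equalities.
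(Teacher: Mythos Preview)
Your proposal is correct and follows essentially the same approach as the paper: both cast the worst-case expectation as a moment problem over positive measures on $\Xi$, invoke strong duality for moment problems via the Slater-type condition $\mu \in \mathrm{int}(\Xi)$ (the paper cites Shapiro's Proposition~3.4, you cite Isii/Shapiro), obtain the semi-infinite dual $\gamma + \xi^\top \lambda \ge \ell(\xi)$ on $\Xi$, split it along the pieces of $\ell$, and then dualize each robust linear constraint over the polytope $\Xi$ via LP strong duality. The only difference is that you write out the Lagrangian computation explicitly whereas the paper quotes the duality result directly, but the substance is identical.
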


The result presented in Proposition~\ref{prop:duality} can be recovered as a special case of~\cite[Theorem~1]{ref:wiesemann2014distributionally}. An elementary proof is presented here for completeness.

\begin{proof} By exploiting the definition of the ambiguity set $\mc Q$, the worst-case expected loss can be written as an infinite dimensional optimization problem
\be \label{eq:worst-case-expection}
        \Max{\QQ \in \mc Q}~\EE_{\QQ}[\ell(\xi)] = 
        \left\{
            \begin{array}{cl}
                \max &  \ds \int_{\R^{|\mc I|}} \ell(\xi)~\QQ(\mathrm{d} \xi) \\
                \st & \QQ \in \mc M(\R^{|\mc I|})\\
                &\ds \int_{\R^{|\mc I|}}\mathbbm{1}_{\Xi}(\xi) ~\QQ(\mathrm{d} \xi) = 1 \\
                & \ds \int_{\R^{|\mc I|}} \xi~\QQ(\mathrm{d} \xi) = \mu,
            \end{array}
        \right.
    \ee
    where $\mc M(\R^{|\mc I|})$ denotes the set of all positive measures  on $\R^{|\mc I|}$ and $\mathbbm{1}_{\Xi}$ is the indicator of the set $\Xi$, that is,
    \[
        \mathbbm{1}_{\Xi}(\xi) = \begin{cases}
            1 & \text{if } \xi \in \Xi, \\
            0 & \text{otherwise}.
        \end{cases}
    \]
    Because $\mu$ lies in the interior of the support set $\Xi$ by assumption, the Slater type condition holds for problem~\eqref{eq:worst-case-expection}. As a consequence, strong duality holds by~\cite[Proposition~3.4]{ref:shapiro2001on}, and the optimal value of problem~\eqref{eq:worst-case-expection} equals to the optimal value of the following semi-infinite optimization problem
    \be \label{eq:dual}
        \begin{array}{cl}
            \min & \ds \gamma + \mu^\top \lambda \\ [2ex]
            \st & \lambda \in \R^{|\mc I|}, \, \gamma \in \R \\
            & \ds \gamma + \xi^\top \lambda \ge \ell(\xi) \quad \forall \xi \in \Xi.
        \end{array}
    \ee
    By exploiting the piecewise affine form of the loss function $\ell$, the constraint of problem~\eqref{eq:dual} can be re-expressed as a system of $K$ semi-infinite linear constraints
    \[
        \ds \gamma + \xi^\top \lambda \ge \xi^\top c_k  + d_k \quad \forall \xi \in \Xi\quad \forall k \in \llbracket 0, K \rrbracket,
    \]
    which is further equivalent to a system of $K$ robust linear constraints
    \[
        \ds \max_{\xi \in \Xi}~\xi^\top (c_{k} - \lambda) \le \gamma -  d_k \quad  \forall k \in \llbracket 0, K \rrbracket
    \]
    Formulating the dual linear program of the supremum problem on the left hand side of the above constraint, we find that problem~\eqref{eq:dual} is equivalent to
    \[
        \begin{array}{cll}
            \min & \ds \gamma + \mu^\top \lambda \\ [2ex]
            \st & \gamma \in \R,\, \lambda \in \R^{|\mc I|}, \, \eta_k \in \R_+^{M} & \forall k \in \llbracket 0, K \rrbracket \\
            &\left. 
            \begin{array}{l}
            h^\top \eta_k \leq \gamma - d_k  \\
            G^\top \eta_k  = c_k - \lambda 
            \end{array}
            \right\}
            & \forall k \in \llbracket 0, K \rrbracket.
        \end{array}
    \]
    This observation completes the proof. 
\end{proof}

\bibliographystyle{siam}
\bibliography{Fair.bbl}

\end{document}